\numberwithin{equation}{section}
\newcommand{\R}{{\mathbb R}}
\renewcommand{\d}{\partial}
\newcommand{\f}{\varphi}
\def\average{{\mathchoice
{\kern1ex\vcenter{\hrule height.4pt width 6pt depth0pt}
\kern-9.7pt}
{\kern1ex\vcenter{\hrule height.4pt width 4.3pt depth0pt}
\kern-7pt}
{}
{}
}}
\def\aaverage{{\mathchoice
{\kern1ex\vcenter{\hrule height.4pt width 12pt depth0pt}
\kern-16pt}
{\kern1ex\vcenter{\hrule height.4pt width 12pt depth0pt}
\kern-16pt}
{}
{}
}}
\def\aave{\aaverage\iint}
\newtheorem{theo}{{\sc \bf Theorem}}[section]
\newtheorem{lem}[theo]{{\sc \bf Lemma}}
\newenvironment{defin}{\medskip\noindent{\it Definition:\/} }{\medskip}
\begin{document}

\title{Estimates in the Generalized Morrey Spaces for Linear Parabolic Systems}


\author{Matt McBride}
\address{Department of Mathematical Sciences,
Indiana University-Purdue University Indianapolis,
402 N. Blackford St., Indianapolis, IN 46202, U.S.A.}
\email{mmcbride@math.iupui.edu}

\thanks{}

\date{\today}

\begin{abstract}
The purpose of this paper is to study the parabolic system $u_t^i-D_{\alpha}(a_{ij}^{\alpha\beta} D_\beta u^j)=-\textrm{ div } f^{i}$ in the generalized Morrey Space $L_{\varphi}^{2,\lambda}$.   We would like to understand the regularity of the solutions of this system.   It will be shown that 1: if $a_{ij}^{\alpha\beta}\in C(\overline{Q_{T}})$ then $Du\in L_{\varphi}^{2,\lambda}$ , and 2: if $a_{ij}^{\alpha\beta}\in VMO(Q_{T})$ then $Du\in L_{\varphi}^{2,\lambda}$.   Moreover we will be able to obtain estimates on the gradient of the solutions to the system, which will tell us about the regularity of the solutions.
\end{abstract}

\maketitle
\section{Introduction}
In this paper we will be investigating the following linear parabolic systems of the form

\begin{equation}\label{parpde}
u_t^i-D_\alpha\left(a_{ij}^{\alpha\beta}(x,t)D_\beta u^j\right)=-\textrm{ div } f^i(x,t)\textrm{ for }i=1,\dots, N
\end{equation}

where $i,j=1,\dots, N;\ \alpha,\beta=1,\dots, n$ and the repeated indices denote summation such as

\begin{equation*}
a_{ij}\xi^i\xi^j=\sum_{i=1}^n\sum_{j=1}^na_{ij}\xi^i\xi^j.
\end{equation*}

Throughout the paper we assume an uniform ellipticity condition, namely:

\begin{equation}\label{ellpcond}
\Lambda^{-1}|\xi|^2\leq a_{ij}^{\alpha\beta}(x,t)\xi_\alpha^i\xi_\beta^j\leq\Lambda|\xi|^2
\end{equation}

where $\Lambda>0$, $\xi\in\R^{(n+1)N}$, $(x,t)\in Q_{T}$, $Q_{T}=\Omega\times[0,T]$, and $\Omega\subset\R^n$.   The main purpose of this paper is to demonstrate that one can obtain the gradient estimates in generalized Morrey spaces $L_{\varphi}^{2,\lambda}$ for weak solutions of (\ref{parpde}).   In the next section we discuss some definitions and preliminaries.

\section{Preliminaries and Weak Solutions}
In this section we discuss some needed theorems and lemmas for the main results of this paper.   More over we state some notation that will be used throughout this paper.   We denote the $n$-dimensional ball centered at $x_0$ with radius $R$ as 

\begin{equation*}
B_R(x_0) = \{x\in\R^n \ : \ \|x-x_0\| <R\}.
\end{equation*}

We will also let $z$ represent a $n+1$ dimensional coordinate, i.e. $z\in \R^n\times (0,T]$, where $z=(x,t)$, $x\in\R^n$ and $t\in(0,T]$.   Similarly $z_0 = (x_0,t_0)$.   We will denote the parabolic cylinder in $\R^{n+1}$ with vertex at $z_0$ by 

\begin{equation*}
Q_R(z_0) = B_R(x_0) \times (t_0 - R^2,t_0].
\end{equation*}

The boundary of the parabolic cylinder consists of the lateral walls, the lower boundary, and the lower corners, however we will use $\d_pQ_R$ to denote the parabolic boundary of the parabolic cylinder.   Next we define what the Morrey space is for the parabolic setting.

\begin{defin}\label{morreysp}
The parabolic Morrey space is defined to be the following

\begin{equation}
L_\f^{p,\lambda}(Q_T)= \left\{ f\in L^p(Q_T)\ :\ \underset{z_0\in Q_T,0\leq\rho\leq d}{sup }\frac{1}{\f(\rho)}\left(\rho^{-\lambda}\iint_{Q_T\cap Q_\rho(z_0)}\left|f\right|^pdz\right)^{\frac{1}{p}}<\infty\right\}
\end{equation}

with $1\leq p<\infty$, $0\leq\lambda\leq n+2$, and $\f$ is a continuous function on $\left[0,d\right]$, $\f>0$ on $(0,d]$, and $d$ is the diameter of $Q_T=\Omega\times(0,T]$ with $\Omega\subset\R^n$.
\end{defin}

We now come to the first lemma which states that the parabolic Morrey space is a Banach space.

\begin{lem}\label{bansp}
The space $L_\f^{p,\lambda}(Q_T)$ is a Banach space under the following norm

\begin{equation*}
\left\|f\right\|_{L_\f^{p,\lambda}} = \underset{z_0\in Q_T,\ 0\leq\rho\leq d}{sup }\frac{1}{\varphi(\rho)}\left(\rho^{-\lambda}\iint_{Q_T\cap Q_\rho(z_0)}\left|f\right|^p\right)^{\frac{1}{p}}
\end{equation*}
\end{lem}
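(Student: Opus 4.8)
The plan is to proceed in two stages: first to verify that the stated expression is genuinely a norm on a vector space, and then to establish completeness. For the algebraic structure, note that $L_\f^{p,\lambda}(Q_T)$ is by construction a linear subspace of $L^p(Q_T)$, so its elements are equivalence classes modulo almost-everywhere equality. Positive homogeneity, $\|cf\|_{L_\f^{p,\lambda}}=|c|\,\|f\|_{L_\f^{p,\lambda}}$, is immediate upon pulling $|c|^p$ out of each integral. Definiteness follows because $\|f\|_{L_\f^{p,\lambda}}=0$ forces $\iint_{Q_T\cap Q_\rho(z_0)}|f|^p=0$ for every cylinder, hence $f=0$ almost everywhere. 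The triangle inequality is the one nontrivial axiom: for each fixed $z_0$ and $\rho$, Minkowski's inequality in $L^p(Q_T\cap Q_\rho(z_0))$ gives $(\iint|f+g|^p)^{1/p}\le(\iint|f|^p)^{1/p}+(\iint|g|^p)^{1/p}$; multiplying through by $\f(\rho)^{-1}\rho^{-\lambda/p}$ and invoking the subadditivity of the supremum then yields $\|f+g\|_{L_\f^{p,\lambda}}\le\|f\|_{L_\f^{p,\lambda}}+\|g\|_{L_\f^{p,\lambda}}$.

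For completeness, the crucial preliminary observation is that the Morrey norm controls the ambient $L^p$ norm. Since $d$ is the diameter of $Q_T$, one may cover $Q_T$ by a fixed finite number $M$ of parabolic cylinders $Q_d(z_j)$ with $z_j\in Q_T$; applying the defining inequality with $\rho=d$ on each and summing gives $\iint_{Q_T}|f|^p\le M\,d^{\lambda}\f(d)^p\,\|f\|_{L_\f^{p,\lambda}}^p$ (here $\f(d)>0$ since $\f>0$ on $(0,d]$), so that $L_\f^{p,\lambda}(Q_T)$ embeds continuously into $L^p(Q_T)$. Consequently, any sequence $\{f_k\}$ that is Cauchy in the Morrey norm is also Cauchy in $L^p(Q_T)$; by completeness of $L^p$ there exists $f\in L^p(Q_T)$ with $f_k\to f$ in $L^p$, and after passing to a subsequence we may also assume $f_k\to f$ almost everywhere.

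It remains to upgrade this to convergence in the Morrey norm and to place the limit in the space. Given $\ep>0$, choose $K$ so that $\|f_k-f_m\|_{L_\f^{p,\lambda}}<\ep$ for all $k,m\ge K$; by definition this means $\rho^{-\lambda}\iint_{Q_T\cap Q_\rho(z_0)}|f_k-f_m|^p\le\ep^p\f(\rho)^p$ for every $z_0$ and $\rho$. Fixing $z_0$, $\rho$, and $k\ge K$, and letting $m\to\infty$ along the almost-everywhere convergent subsequence, Fatou's lemma applied to $|f_k-f_m|^p\to|f_k-f|^p$ yields $\rho^{-\lambda}\iint_{Q_T\cap Q_\rho(z_0)}|f_k-f|^p\le\ep^p\f(\rho)^p$. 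Since the bound is independent of $z_0$ and $\rho$, taking the supremum gives $\|f_k-f\|_{L_\f^{p,\lambda}}\le\ep$ for all $k\ge K$. This shows at once that $f_k-f\in L_\f^{p,\lambda}$, so that $f=f_k-(f_k-f)$ lies in the space, and that $f_k\to f$ in the Morrey norm, completing the argument.

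I expect the main obstacle to be the completeness step, and within it the uniform passage to the limit: the Cauchy estimate must be converted into a bound on each individual cylinder before the supremum is taken, and the limit $m\to\infty$ must be carried inside the integral uniformly in $(z_0,\rho)$. Fatou's lemma is the clean tool for this, but it is essential first to have secured the candidate limit $f$ together with its almost-everywhere convergence through the continuous embedding into $L^p(Q_T)$; without that embedding there would be no a priori limit object to estimate against.
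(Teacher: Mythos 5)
Your proof is correct, and its overall skeleton coincides with the paper's: the norm axioms are handled the same way (Minkowski's inequality for the triangle inequality), and completeness is obtained by extracting an a.e.-convergent subsequence from a Cauchy sequence and then applying Fatou's lemma on each cylinder before taking the supremum — exactly the paper's closing argument. The one genuine difference is how you produce the candidate limit $f$: the paper invokes Tschebyshev's inequality to get convergence in measure and extracts the a.e.-convergent subsequence from that, whereas you first prove a continuous embedding $L_\f^{p,\lambda}(Q_T)\hookrightarrow L^p(Q_T)$ (by covering $Q_T$ with finitely many cylinders of radius $d$ and using $\f(d)>0$) and then let $L^p$-completeness supply the limit. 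Your route is slightly longer but tighter: the paper's Tschebyshev step as written bounds the measure of $\{z\in Q_T : |f_k-f_m|>\ep\}$ by an integral over only $Q_T\cap Q_\rho(z_0)$, which does not literally control the measure over all of $Q_T$ without the very covering/embedding argument you make explicit; your version also yields $f\in L^p(Q_T)$ automatically, a point the paper leaves implicit when asserting $f\in L_\f^{p,\lambda}$. In short, both proofs are the same strategy, but your embedding lemma patches a small gap in the paper's execution of it.
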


\begin{proof}
First it must be shown that $\left\|\ \cdot \ \right\|_{L_\f^{p,\lambda}}$ is indeed a norm.   Then it must be shown that the space is complete.   Showing the conditions $\left\|f\right\|_{L_\f^{p,\lambda}}$ is positive definite and the homogeneity condition are trivial.   However the triangle inequality is not obvious.   Using Minkowski's inequality and the face that $\f >0$ on $(0,d]$ one gets the following,

\begin{equation*}
\begin{aligned}
&\hspace{-.8cm}\left\|f+g\right\|_{L_\f^{p,\lambda}}= \\
&=\textrm{ sup }\frac{1}{\f(\rho)}\left(\rho^{-\lambda}\iint_{Q_T\cap Q_\rho(z_0)}\left|f+g\right|^pdz\right)^{\frac{1}{p}} \\
&\leq\textrm{ sup } \frac{1}{\f(\rho)}\left(\left(\rho^{-\lambda}\iint_{Q_T\cap Q_\rho(z_0)}\left|f\right|^pdz\right)^{\frac{1}{p}}+\left(\rho^{-\lambda}\iint_{Q_T\cap Q_\rho(z_0)}\left|g\right|^pdz\right)^{\frac{1}{p}}\right) \\
&\leq\textrm{ sup }\frac{1}{\f(\rho)}\left(\rho^{-\lambda}\iint_{Q_T\cap Q_\rho(z_0)}\left|f\right|^pdz\right)^{\frac{1}{p}} + \textrm{ sup }\frac{1}{\f(\rho)}\left(\rho^{-\lambda}\iint_{Q_T\cap Q_\rho(z_0)}\left|g\right|^pdz\right)^{\frac{1}{p}} \\
&= \left\|f\right\|_{L_\f^{p,\lambda}} + \left\|g\right\|_{L_\f^{p,\lambda}}.
\end{aligned}
\end{equation*}

This string of inequalities shows the triangle inequality is satisfied and thus $\left\|\ \cdot \ \right\|_{L_\f^{p,\lambda}}$ defines a norm.   All that is left to show is that the space is complete under the norm.   To do this one must show that every Cauchy sequence from $L_\f^{p,\lambda}(Q_T)$ converges to an element in $L_\f^{p,\lambda}(Q_T)$.   Let $\{f_k\}_{k=1}^{\infty}$ be a Cauchy sequence in $L_\f^{p,\lambda}$.   Tschebyshev's inequality implies that

\begin{equation*}
m\left\{ z\in Q_T\ :\ |f_k(z)-f_m(z)|>\varepsilon\right\}\leq \varepsilon^{-p}\iint_{Q_T\cap Q_\rho(z_0)}\left|f_k-f_m\right|^pdz
\end{equation*}

where $m$ is standard Lebesgue measure.   Therefore, there exists a subsequence $\{f_{k_j}\}$
and a $f$ such that $\{f_{k_j}\}$ converges to $f$ a.e. in $Q_{T}$.   Then for every $\varepsilon>0$ there exists $K$ such that $\|f_{k_j}-f_k\|_{L_\f^{p,\lambda}}<\varepsilon$ if $k_j,k>K$.    Letting $k_j\rightarrow\infty$, Fatou's lemma implies that
 $\|f-f_k\|_{L_\f^{p,\lambda}}<\varepsilon$ for $k>K$.    Thus $f\in L_\f^{p,\lambda}$ by $\|f\|_{L_\f^{p,\lambda}}\leq\|f-f_k\|_{L_\f^{p,\lambda}}+\|f_k\|_{L_\f^{p,\lambda}}<\infty$ and $\|f-f_k\|_{L_\f^{p,\lambda}}\rightarrow0$ as $k\rightarrow\infty$
.   Therefore $L_\f^{p,\lambda}$ is complete and hence it is a Banach space.   This completes the proof.
\end{proof}

For the rest this paper we will set $p=2$ in the Morrey space $L_\f^{p,\lambda}$.   We state this next for convenience.

\begin{equation*}
L_\f^{2,\lambda}(Q_T)= \left\{ f\in L^2(Q_T)\ :\ \underset{z_0\in Q_T,0\leq\rho\leq d}{sup }\frac{1}{\f(\rho)}\left(\rho^{-\lambda}\iint_{Q_T\cap Q_\rho(z_0)}\left|f\right|^2 dz\right)^{\frac{1}{2}}<\infty\right\}
\end{equation*}

The next definition was coined in \cite{Hu} and the subsidiary lemma was also proven in the same paper.   The following definition defines when a function is said to be ``almost'' increasing.   We say ``almost'' since the natural thought with almost would be that the function is increasing everywhere except on a set of measure zero, however, here is not the case.

\begin{defin}\label{almst_inc}
A function $h:\left[0,d\right]\rightarrow[0,\infty)$ is said to be almost increasing if there exists $K_h\geq1$ such that $h(s)\leq K_hh(t)$ for $0\leq s\leq t\leq d$.
\end{defin}

Now we state the lemma that was proven in \cite{Hu}.

\begin{lem}\label{almst_inc_lem}
Let $H$ be a non-negative almost increasing function in $[0,R_0]$ and $F$ a positive function on $(0,R_0]$.    Suppose that

\begin{enumerate}
  \item There exists $A,B,\varepsilon,\beta>0$ such that $H(\rho)\leq\left(A\left(\rho/R\right)^{\beta}+\varepsilon\right)H(R)+BF(R)$ for 
  \newline $0\leq\rho\leq R\leq R_{0}$

\item There exists $\gamma\in\left(0,\beta\right)$ such that $\frac{\rho^{\gamma}}{F(\rho)}$ is almost increasing in $(0,R_{0}]$
\end{enumerate}

Then there exists $\varepsilon_0=\varepsilon_0(A,\beta,\gamma)$ and $C=C\left(A,\beta,\gamma,K_H,K\right)$ such that if $\varepsilon<\varepsilon_0$ then $H(\rho)\leq C\frac{F(\rho)}{F(R)}H(R)+CBF(\rho)$.
\end{lem}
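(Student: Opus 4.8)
The plan is to run a discrete iteration over the geometric sequence of radii $\tau^{k}R$ for a fixed ratio $\tau\in(0,1)$, in the spirit of the standard iteration lemma from elliptic regularity theory, and then to transfer the resulting dyadic estimate back to arbitrary radii using the two ``almost increasing'' hypotheses. First I would fix an intermediate exponent $\gamma'$ with $\gamma<\gamma'<\beta$ and choose $\tau$ so small that $A\tau^{\beta}\le\tfrac12\tau^{\gamma'}$; this is possible because $\beta>\gamma'$ forces $\tau^{\beta-\gamma'}\to0$ as $\tau\to0$. Setting $\varepsilon_0=\tfrac12\tau^{\gamma'}$ (which then depends only on $A,\beta,\gamma$), hypothesis (1) applied with $\rho=\tau R$ collapses, for $\varepsilon<\varepsilon_0$, to the one-step contraction $H(\tau R)\le\tau^{\gamma'}H(R)+BF(R)$.

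Next I would iterate this inequality along $R\mapsto\tau^{k}R$, obtaining by induction
\begin{equation*}
H(\tau^{k}R)\le\tau^{k\gamma'}H(R)+B\sum_{j=0}^{k-1}\tau^{\gamma'(k-1-j)}F(\tau^{j}R).
\end{equation*}
The heart of the matter, and the step I expect to be the main obstacle, is controlling the forcing sum, since the terms $F(\tau^{j}R)$ are not explicit powers. Here hypothesis (2) enters: because $\rho^{\gamma}/F(\rho)$ is almost increasing with constant $K$, comparing the scale $\tau^{j}R$ with the smallest scale $\tau^{k}R$ yields $F(\tau^{j}R)\le K\tau^{(j-k)\gamma}F(\tau^{k}R)$. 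Substituting, the exponents combine to $\tau^{-\gamma'}\tau^{(\gamma'-\gamma)(k-j)}$, and since $\gamma'-\gamma>0$ the resulting series is geometric with ratio $\tau^{\gamma'-\gamma}<1$; it therefore sums to a constant multiple of $F(\tau^{k}R)$. It is precisely the strict gap $\gamma<\gamma'$ that makes this work: had I instead tried to iterate at the rate $\tau^{\gamma}$ directly, the exponents would cancel and the sum would grow like $k$, i.e. diverge. This produces $H(\tau^{k}R)\le\tau^{k\gamma'}H(R)+CBF(\tau^{k}R)$ with $C=C(A,\beta,\gamma,K)$.

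Finally I would recast this in the stated form and remove the restriction to dyadic radii. Applying hypothesis (2) once more to the pair $(\tau^{k}R,R)$ gives $F(\tau^{k}R)\ge K^{-1}\tau^{k\gamma}F(R)$, and since $\gamma'>\gamma$ and $\tau<1$ we get $\tau^{k\gamma'}\le\tau^{k\gamma}\le K\,F(\tau^{k}R)/F(R)$; hence the homogeneous term is absorbed into $C\,\frac{F(\tau^{k}R)}{F(R)}H(R)$, giving the desired bound at every scale $\rho=\tau^{k}R$. For a general $\rho\in(0,R]$ I would pick $k$ with $\tau^{k+1}R<\rho\le\tau^{k}R$ and use that $H$ is almost increasing, $H(\rho)\le K_{H}H(\tau^{k}R)$, together with the comparison $F(\tau^{k}R)\le K\tau^{-\gamma}F(\rho)$ coming from hypothesis (2) applied to $\rho\le\tau^{k}R$. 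Collecting the losses $K_{H}$, $K$, and the fixed power $\tau^{-\gamma}$ into one constant $C=C(A,\beta,\gamma,K_{H},K)$ then yields $H(\rho)\le C\,\frac{F(\rho)}{F(R)}H(R)+CBF(\rho)$, as claimed.
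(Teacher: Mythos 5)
Your proof is correct. The paper itself gives no proof of this lemma --- it is quoted without proof from \cite{Hu} --- and your argument is exactly the standard iteration-lemma proof used there: fix $\gamma'\in(\gamma,\beta)$, choose $\tau$ with $A\tau^{\beta}\le\tfrac12\tau^{\gamma'}$ and $\varepsilon_0=\tfrac12\tau^{\gamma'}$, iterate the one-step contraction along the scales $\tau^{k}R$, sum the forcing terms via the almost-monotonicity of $\rho^{\gamma}/F(\rho)$ (your observation that the strict gap $\gamma<\gamma'$ is what makes the geometric series converge is precisely the key point), and finally pass from dyadic to arbitrary radii using the almost-monotonicity of $H$ and of $\rho^{\gamma}/F(\rho)$, which is where the constants $K_{H}$ and $K$ enter.
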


Next we define the bounded mean oscillation and vanishing mean oscillation spaces in the parabolic setting.    This is already well understood in say a $n$-dimensional ball or on some bounded domain $\Omega$ in $\R^n$.   We define the bounded mean oscillation, $BMO(Q_T)$, in the following way.

\begin{defin}
Let $\psi\in C[0,d]$ and $\psi>0$ on $[0,d]$; so $\psi$ is a positive continuous function on the interval $[0,d]$.   The $BMO(Q_T)$ is defined by

\begin{equation*}
\begin{aligned}
&BMO_{\psi}(Q_{T})= \\
&=\left\{ f\in L^2(Q_T)\ :\ \underset{z_0\in Q,0\leq\rho\leq d}{sup}\frac{1}{\psi(\rho)}\left(\aave_{Q_T\cap Q_\rho(z_0)}\left|f(z)-f_{Q_T\cap Q_\rho(z_0)}(z_0)\right|^2 dz\right)^{\frac{1}{2}}<\infty\right\}
\end{aligned}
\end{equation*}

where $f_A={\displaystyle \aave_A f(z)dz = \frac{1}{m(A)}\iint_A f(z)dz}$ and $A\subset\R^{n+1}$.
\end{defin}

Next we define the vanishing mean oscillation space for the parabolic setup in a similar way.

\begin{defin}
If $\psi=1$, where $\psi$ is the continuous function defined in the bounded mean oscillation space definition, then the $VMO(Q_T)$ is defined by

\begin{equation*}
VMO(Q_{T}) =\left\{ f\in BMO(Q_{T})\ :\ [f]_{BMO(Q_T:\sigma)}\rightarrow0\textrm{ as }\sigma\rightarrow0\right\}
\end{equation*}

where

\begin{equation*}
[f]_{BMO(Q_T:\sigma)} = \underset{z_0\in Q,0\leq\rho\leq\sigma}{sup}\left(\aave_{Q_T\cap Q_\rho(z_0)}\left|f(z)-f_{Q_T\cap Q_\rho(z_0)}(z_0)\right|^2 dz\right)^{\frac{1}{2}}
\end{equation*}
\end{defin}

For the rest of this section discuss the weak solutions to variation of the system of parabolic parabolic partial differential equations that this paper is concerned with.   Let $a_{ij}^{\alpha\beta}$ be constant and consider the following system in $Q_T$ :

\begin{equation}\label{homogen_pde}
u_t^i-D_\alpha\left(a_{ij}^{\alpha\beta}D_\beta u^j\right)=0
\end{equation}

For $Q_R(z_0)\subset Q_T$, let $u^i\xi^2(x)\eta(t)$ be a test function with $\xi\in C_0^{\infty}(B_R(x_0))$, the space of smooth functions vanishing at infinity,  $0\leq\xi\leq1$, and $\left|D\xi\right|\leq\frac{C}{R-\rho}$ with $B_\rho(x_0)\subset B_R(x_0)\subset\Omega$ and $\eta(t)$ defined in the following way

\begin{equation*}
\eta(t) = \left\{
\begin{array}{cc}
\frac{t-(t_0-R^2)}{R^2-\rho^2} & t\in (t_0-R^2,t_0-\rho^2) \\
1 & t\in[t_0-\rho^2,t_0)
\end{array}\right. .
\end{equation*}

Multiplying (\ref{homogen_pde}) by the test function, using integration by parts and noticing that the boundary term is zero by the definition of $\eta$ and $\xi$ one gets

\begin{equation*}
\begin{aligned}
0&=\iint_{B_R(x_0)\times(t_0-R^2,t]}\left(u_t^i-D_\alpha\left(a_{ij}^{\alpha\beta}D_\beta u^j\right)\right)u^i\xi^2\eta\ dxdt\\
&=\iint_{B_R(x_0)\times(t_0-R^2,t]}u_t^i u^i\xi^2\eta+a_{ij}^{\alpha\beta}D_\beta u^j D_\alpha\left(u^i\xi^2\eta\right)\ dxdt\\
&=\iint_{B_R(x_0)\times(t_0-R^2,t]}\left(\frac{1}{2}\left|u\right|^2\right)_t\xi^2\eta\ dxdt \\
&\qquad +\iint_{B_R(x_0)\times(t_0-R^2,t]}a_{ij}^{\alpha\beta}D_\beta u^j\left(\xi^2 D_\alpha u^i + 2\xi u^i D_\alpha\xi\right)\eta\ dxdt \\
&=\iint_{B_R(x_0)\times(t_0-R^2,t]}\left(\frac{1}{2}\left|u\right|^2 \eta\right)_t \xi^2 -\frac{1}{2}\left|u\right|^2 \xi^2 \eta_t\ dxdt \\
&\qquad + \iint_{B_R(x_0)\times(t_0-R^2,t]}a_{ij}^{\alpha\beta}D_\beta u^j\left(\xi^2 u^i +2 \xi u^i D_\alpha \xi\right)\eta\ dxdt .
\end{aligned}
\end{equation*}

Then by the uniform ellipticity condition, (\ref{ellpcond}), and the Cauchy-Schwartz inequality one has

\begin{equation*}
\begin{aligned}
\int_{B_R(x_0)}&\frac{1}{2}\left|u(x,t)\right|^2 \xi^2(x)\ dx + C\int_{t_0-R^2}^t\int_{B_R(x_0)}\xi^2(x)\left|Du\right|^2\ dxdt \\
&\leq\frac{1}{2}\int_{t_0-R^2}^t\int_{B_R(x_0)}\left|u\right|^2\xi^2\eta_t\ dxdt +C\int_{t_0-R^2}^t\left|D\xi\right|^2\left|u\right|^2\eta\ dt \\
&\leq C\int_{t_0-R^2}^t\int_{B_R(x_0)}\left|u\right|^2\left(\left|D\xi\right|^2\eta +\frac{1}{2}\xi^2\eta_t\right)\ dxdt .
\end{aligned}
\end{equation*}

Then since $|D\xi|\leq \frac{C}{R-\rho}$ and, by a simple computation, $\eta_t \leq \frac{C}{R^2-\rho^2}$ one gets

\begin{equation*}
\begin{aligned}
\int_{B_R(x_0)}&\frac{1}{2}\left|u(x,t)\right|^2\xi^2\ dx + \int_{t_0-R^2}^t \int_{B_R(x_0)}\left|Du\right|^2\xi^2\eta\ dxdt \\
&\leq C\int_{t_0-R^2}^t\int_{B_R(x_0)}\left|u\right|^2\left(\frac{1}{(R-\rho)^2}+\frac{1}{R^2-\rho^2}\right)\ dxdt .
\end{aligned}
\end{equation*}

This last inequality in turn implies the following inequality

\begin{equation}\label{energy_estimate}
\underset{t_0-\rho^2 \leq t\leq t_0}{\textrm{ sup }}\int_{B_\rho(x_0)}|u|^2 +\iint_{Q_\rho(z_0)}|Du|^2\leq\frac{C}{(R-\rho)^2}\iint_{Q_R(z_0)}|u|^2
\end{equation}

The inequality, (\ref{energy_estimate}), is called the energy estimate for the system of partial differential equations stated in (\ref{homogen_pde}).   We derived this energy estimate as it will have applications to the proofs of the main results it also lets us define a Sobolev space counterpart for parabolic equations.   Consider the following definition of a space

\begin{equation*}
V_2(Q_T)=\left\{ u\ :\ u\in L^\infty(0,T;L^2(Q_T)),\ Du\in L^2(Q_T)\right\} .
\end{equation*}

$V_2(Q_T)$ is said to be the Solobev space counterpart for parabolic equations.   We call it this since our definition of it looks very similar to the definition of the Solobev space with $p=q=2$.   Using these energy estimates and the Sobolev embedding theorem, one can get the Morrey estimate for the system of partial differential equations stated in (\ref{homogen_pde}) with constant coefficients.    The Morrey estimate for a system of homogeneous parabolic partial differential equations with constant coefficients is

\begin{equation}
\iint_{Q_\rho(z_0)}|Du|^2\leq C\left(\frac{\rho}{R}\right)^{n+2}\iint_{Q_R(z_0)}|Du|^2
\end{equation}

for $Q_R(z_0)\subset Q_T$ and $0\leq \rho\leq R$.   We end this section with the formal statement of this and a proof.

\begin{lem}\label{const_est}
Let $u\in V_2(Q_T)$ be a solution to the system of partial differential equations defined in (\ref{homogen_pde}) in $Q_T = \Omega\times (0,T]$.   Then for $Q_R(z_0)\subset Q_T$ and $0\leq \rho\leq R$ the following inequality holds

\begin{equation*}
\iint_{Q_\rho(z_0)}|Du|^2\leq C\left(\frac{\rho}{R}\right)^{n+2}\iint_{Q_R(z_0)}|Du|^2
\end{equation*}
\end{lem}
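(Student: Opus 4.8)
The plan is to establish this "Morrey-type" decay estimate for the gradient by combining the energy estimate (\ref{energy_estimate}) with a Caccioppoli-type iteration argument on dyadic cylinders. The key observation is that the gradient $Du$ itself satisfies the same homogeneous system (\ref{homogen_pde}) after differentiation in the spatial directions, since the coefficients $a_{ij}^{\alpha\beta}$ are constant. First I would note that differentiating (\ref{homogen_pde}) with respect to a spatial variable $x_\gamma$ shows that $D_\gamma u$ is again a weak solution of the same constant-coefficient system, so the energy estimate (\ref{energy_estimate}) applies with $u$ replaced by $D_\gamma u$. This is the crucial structural fact that lets the whole scheme proceed.

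The core of the argument is a decay-doubling step. For $0\leq\rho\leq R/2$, I would apply the energy estimate to $Du$ on nested cylinders $Q_\rho(z_0)\subset Q_{R/2}(z_0)$ to bound the local $L^2$-norm of the second derivatives $D^2u$ in terms of the $L^2$-norm of $Du$ on a slightly larger cylinder. Using the Sobolev embedding for the parabolic space $V_2(Q_T)$ — controlling the sup-in-time plus gradient norm that appears on the left of (\ref{energy_estimate}) — one upgrades this into pointwise or higher-integrability control, from which the Poincar\'e-type inequality yields an estimate of the form
\begin{equation*}
\iint_{Q_\rho(z_0)}|Du - (Du)_{Q_\rho}|^2 \leq C\left(\frac{\rho}{R}\right)^{2}\iint_{Q_R(z_0)}|Du|^2,
\end{equation*}
gaining one degree of smoothness because $Du$ is itself a caloric-type solution. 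Iterating the gain in regularity (all spatial derivatives of $u$ solve the same system) produces arbitrarily high-order polynomial decay, and in particular the decay rate $(\rho/R)^{n+2}$ claimed in the statement, which corresponds to the scaling dimension $n+2$ of the parabolic cylinder $Q_R = B_R\times(t_0-R^2,t_0]$.

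For the range $R/2\leq\rho\leq R$ the inequality is trivial after adjusting the constant $C$, since then $(\rho/R)^{n+2}\geq 2^{-(n+2)}$ and the left side is dominated by the full integral over $Q_R$; so it suffices to treat small $\rho$ and absorb the remaining cases into the constant. The main obstacle I anticipate is the parabolic Sobolev embedding step: unlike the elliptic case, the space $V_2(Q_T)$ mixes $L^\infty$-in-time with $L^2$-in-space for the gradient, and extracting genuine H\"older or $L^q$ control of $Du$ from the energy estimate requires the anisotropic parabolic embedding and careful tracking of how the time-scaling $t\sim R^2$ interacts with the spatial scaling. Getting the exponent $n+2$ exactly — rather than merely some positive power — depends on iterating the regularity gain the correct number of times and on the fact that, for constant coefficients, $u$ is smooth in the interior so that every spatial derivative is again a solution; I would make this rigorous by a bootstrap that at each stage differentiates the equation once more and reapplies (\ref{energy_estimate}).
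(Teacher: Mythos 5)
Your starting point coincides with the paper's: since the coefficients are constant, every spatial derivative $D_\gamma u$ is again a weak solution of (\ref{homogen_pde}), and the decay estimate should follow from interior regularity for such solutions. The paper finishes at that point in one line, quoting the interior estimate of \cite{Sc} for constant-coefficient parabolic systems and applying it to $Du$; you instead attempt a self-contained bootstrap. That route is legitimate in principle, but the mechanism you describe has a genuine gap. The decay you claim --- ``arbitrarily high-order polynomial decay'' of $\iint_{Q_\rho(z_0)}|Du|^2$, with $(\rho/R)^{n+2}$ as a particular instance --- is impossible: if $u$ is linear in $x$, so that $Du$ is a nonzero constant, then $\iint_{Q_\rho(z_0)}|Du|^2 = m\left(Q_\rho(z_0)\right)|Du|^2 \sim \rho^{n+2}$, so the exponent $n+2$ is a ceiling attained by actual solutions, not one member of an unbounded family of rates. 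The reason your iteration appears to give more is that you switch quantities midstream: the Poincar\'e step controls the oscillation $\iint_{Q_\rho(z_0)}|Du - (Du)_{Q_\rho}|^2$, and oscillation decay does not convert into decay of the full integral by iteration, because the mean contributes a term $m\left(Q_\rho(z_0)\right)|(Du)_{Q_\rho}|^2 \sim \rho^{n+2}$ that caps the rate. Moreover the displayed oscillation estimate, with exponent $2$, is too weak to be used at all: a Campanato-type iteration yields pointwise information only when the oscillation exponent exceeds $n+2$.

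Your elementary approach can be repaired as follows, and this is essentially what lies behind the estimate the paper imports from \cite{Sc}. Bootstrap the energy estimate (\ref{energy_estimate}) on nested cylinders: every $D^k u$ solves the system, so $\iint_{Q_{R/2}}|D^k u|^2 \leq C_k R^{-2k}\iint_{Q_R}|u|^2$, and time derivatives are controlled through the equation $u_t^i = a_{ij}^{\alpha\beta}D_\alpha D_\beta u^j$, which trades one time derivative for two spatial ones. Taking sufficiently many derivatives and applying the Sobolev embedding in $\R^{n+1}$ on a cylinder of fixed relative size gives the scale-invariant sup bound
\begin{equation*}
\sup_{Q_{R/2}(z_0)}|Du|^2 \leq C\, \aave_{Q_R(z_0)}|Du|^2 ,
\end{equation*}
where the constant is independent of $R$ by parabolic rescaling $(x,t)\mapsto (Rx, R^2 t)$. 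Then for $\rho\leq R/2$ one concludes directly:
\begin{equation*}
\iint_{Q_\rho(z_0)}|Du|^2 \leq m\left(Q_\rho(z_0)\right)\sup_{Q_\rho(z_0)}|Du|^2 \leq C\left(\frac{\rho}{R}\right)^{n+2}\iint_{Q_R(z_0)}|Du|^2 ,
\end{equation*}
and the range $R/2\leq\rho\leq R$ is trivial, as you correctly note. The exponent $n+2$ thus arises from the measure of $Q_\rho$ weighed against a sup bound, not from iterated regularity gains.
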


\begin{proof}
Recall the system of partial differential equations stated in (\ref{homogen_pde}):

\begin{equation}
u_t^i-D_\alpha\left(a_{ij}^{\alpha\beta}D_\beta u^j\right)=0 .
\end{equation}

Since the coefficients, $a_{ij}^{\alpha\beta}$, are constant, differentiating the above equation with respect to $x$ shows that $D_\alpha u$ is still a solution to the system of differential equations.   By \cite{Sc} one has

\begin{equation*}
\aave_{Q_\rho(z_0)}|u|^2 \leq C\left(\frac{\rho}{R}\right)^2\aave_{Q_R(z_0)}|u|^2
\end{equation*}

where $u$ is a solution to the above system of equations.   Using this inequality and the fact that $D_\alpha u$ is still a solution one arrives at

\begin{equation*}
\iint_{Q_\rho(z_0)}|Du|^2\leq C\left(\frac{\rho}{R}\right)^{n+2}\iint_{Q_R(z_0)}|Du|^2
\end{equation*}

and the result follows.   Thus this completes the proof.
\end{proof}

We now close this section and turn the main results of this paper.

\section{Main Results}
In the previous section the Morrey estimate we are interested in was shown when the coefficients, $a_{ij}^{\alpha\beta}$, were constant and mainly done by \cite{Sc}.   In this section we will extend the result to the system of partial differential equations defined in (\ref{parpde}).   We first establish the Morrey estimate for the case $a_{ij}^{\alpha\beta}\in C(\overline{Q_T})$, the space of continuous functions on the closure of $Q_T$, and second the case $a_{ij}^{\alpha\beta}\in L^\infty(Q_T)\cap VMO(Q_T)$, the space of all bounded functions with vanishing mean oscillation.

\begin{theo}\label{cont_estimate}
Let $u\in V_2(Q_T)$ be a weak solution, in $Q_T$, to the following system of partial differential equations

\begin{equation*}
u_t^i-D_\alpha\left(a_{ij}^{\alpha\beta}(z)D_\beta u^j\right)=-div\, f^i
\end{equation*}

for $i=1,\dots, N$.   Let $a_{ij}^{\alpha\beta}\in C(\overline{Q_T})$ and suppose they satisfy the uniform ellipticity condition with $f^i\in L_\f^{2,\lambda}(Q_T)$.   Suppose there exists $\lambda$ and $\gamma$ such that $\lambda<\gamma<n+2$ and that the function $\frac{r^{\gamma-\lambda}}{\f(r)}$ is almost increasing, then $Du\in L_\f^{2,\lambda}(Q')$ for any $Q'\subset\subset Q_T$, for $Q_R(z_0)\subset Q_T$ and $\rho\leq R$.   Moreover the following inequality holds

\begin{equation*}
\iint_{Q_\rho(z_0)}|Du|^2dz\leq C\frac{\rho^\lambda\f^2(\rho)}{R^\lambda\f^2(R)}\iint_{Q_R(z_0)}|Du|^2dz+C\f^2(\rho)\rho^\lambda\|f\|_{L_\f^{2,\lambda}}^2
\end{equation*}

\end{theo}

\begin{proof}
Let $w$ satisfy the following system

\begin{equation}\label{fixed_pt_eq}
\left\{
\begin{array}{cc}
w_t^i-D_\alpha\left(a_{ij}^{\alpha\beta}(z_0)D_\beta w^j\right)=0 & \textrm{ in } Q_R(z_0) \\
w=u & \textrm{ on } \d_pQ_R(z_0)
\end{array}\right.
\end{equation}

where $z_0$ is a fixed point.   Then $v=u-w$ will satisfy this system

\begin{equation}\label{u_minus_w}
\left\{
\begin{array}{cc}
v_t^i-D_\alpha\left(a_{ij}^{\alpha\beta}(z_0)D_\beta v^j\right) = D_\alpha\left(\left(a_{ij}^{\alpha\beta}(z)-a_{ij}^{\alpha\beta}(z_0)\right)D_\beta u^j\right)-\textrm{ div }f^i & \textrm{ in } Q_R(z_0) \\
v=0 & \textrm{ on } \d_pQ_R(z_0)
\end{array}\right. .
\end{equation}

Clearly by Lemma (\ref{const_est}) one obtains the following inequality

\begin{equation*}
\begin{aligned}
\iint_{Q_\rho(z_0)} |Du|^2 &\leq 2\iint_{Q_\rho(z_0)}\left(|Dw|^2+|Dv|^2\right) \\
&\leq C\left(\frac{\rho}{R}\right)^{n+2} \iint_{Q_R(z_0)} |Dw|^2 + \iint_{Q_\rho(z_0)} |Dv|^2 \\
&\leq C\left(\frac{\rho}{R}\right)^{n+2} \iint_{Q_R(z_0)} |Dw|^2 + C\iint_{Q_R(z_0)} |Dv|^2 .
\end{aligned}
\end{equation*}

Multiplying equation (\ref{u_minus_w}) by $v$, integrating and performing integration by parts, one obtains the following

\begin{equation*}
\iint_{Q_R(z_0)} v_t^iv^i + \iint_{Q_R(z_0)} a_{ij}^{\alpha\beta}(z_0)D_\beta v^jD_\alpha v^i \leq \iint_{Q_R(z_0)}\left|a_{ij}^{\alpha\beta}(z)-a_{ij}^{\alpha\beta}(z_0)\right||Du||Dv| + |f||Dv| .
\end{equation*}

Since $a_{ij}^{\alpha\beta}\in C(\overline{Q_T})$, for small enough $R$ one has $\left|a_{ij}^{\alpha\beta}(z)-a_{ij}^{\alpha\beta}(z_0)\right|< \varepsilon$ for some $\varepsilon>0$.   Therefore one gets

\begin{equation*}
\begin{aligned}
\iint_{Q_R(z_0)}& v_t^iv^i + \iint_{Q_R(z_0)} a_{ij}^{\alpha\beta}(z_0)D_\beta v^jD_\alpha v^i \\
&\leq \varepsilon \iint_{Q_R(z_0)}|Du||Dv| + \iint_{Q_R(z_0)} |f||Dv| \\
&\leq \varepsilon \iint_{Q_R(z_0)}\left(|Du|^2 + |Dv|^2\right) + \iint_{Q_R(z_0)} |f|^2
\end{aligned}
\end{equation*}

using the Schwartz inequality.   This last inequality yields

\begin{equation*}
\iint_{Q_R(z_0)}|Dv|^2 \leq \varepsilon \iint_{Q_R(z_0)} |Du|^2 + C\iint_{Q_R(z_0)}|f|^2 .
\end{equation*}

Therefore one obtains the following

\begin{equation*}
\begin{aligned}
\iint_{Q_\rho(z_0)}&|Du|^2 \\
&\leq C\left(\frac{\rho}{R}\right)^{n+2} \iint_{Q_R(z_0)} |Du|^2 + \varepsilon \iint_{Q_R(z_0)}|Du|^2 +C\iint_{Q_R(z_0)} |f|^2 \\
&\leq \left(C\left(\frac{\rho}{R}\right)^{n+2} + \varepsilon\right)\iint_{Q_R(z_0)} |Du|^2 + C\f^2(R)R^\lambda \|f\|_{L_\f^{2,\lambda}}^2 .
\end{aligned}
\end{equation*}

Then the desired result follows immediately from lemma (\ref{almst_inc_lem}).   Thus the theorem has been proved.
\end{proof}

It has just been shown that the Morrey estimate is valid for continuous functions on the closure of the parabolic domain, which can be thought of as a finite cylinder.   Before the desired estimate can be proven, we will need two additional lemmas.   The first lemma, dubbed the ``reverse'' H\"{o}lder inequality can be found in \cite{SG}.   We will use this in proving the second lemma that we need before the final estimate is shown.

\begin{lem}\label{rev_hol_ineq}
Let $u\in V_2(Q_T)$ be a weak solution to the following system

\begin{equation*}
u_t^i - D_\alpha\left(a_{ij}^{\alpha\beta}(z)D_\beta u^j\right) =0
\end{equation*}

in $Q_T$ with $i=1,\dots, N$.   Assume that the $a_{ij}^{\alpha\beta}$ satisfy the uniform ellipticity condition, then there exists some $s>2$ such that $Du\in L_{loc}^s (Q_T)$ and for every $Q_R\subset Q_{4R}\subset Q_T$ the following inequality holds

\begin{equation*}
\left(\aave_{Q_R} |Du|^s dz\right)^{\frac{1}{s}} \leq C\left(\aave_{Q_{4R}} |Du|^2 dz\right)^{\frac{1}{2}}
\end{equation*}
\end{lem}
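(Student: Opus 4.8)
The plan is to follow the standard route to higher integrability for parabolic systems: first establish a \emph{reverse} Hölder inequality with an exponent strictly below $2$, and then feed this into the Gehring--Giaquinta--Modica self-improving lemma to gain a small amount of integrability past $2$. Since the system here is homogeneous (no forcing term $f$), no inhomogeneous quantities will clutter the estimate, and all constants will depend only on $n$, $N$, and the ellipticity constant $\Lambda$ from \eqref{ellpcond}.

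First I would derive a Caccioppoli (reverse Poincaré) inequality on concentric cylinders $Q_\rho(z_0)\subset Q_R(z_0)\subset Q_T$. This is essentially the computation already carried out for the energy estimate \eqref{energy_estimate}, except that I would test the equation against $(u^i-c^i)\xi^2\eta$ for an arbitrary constant vector $c$ (the choice $c=u_{Q_R}$ being the useful one) rather than against $u^i\xi^2\eta$. Using the cutoffs $\xi,\eta$ with $|D\xi|\le C/(R-\rho)$ and $\eta_t\le C/(R^2-\rho^2)$, together with the ellipticity condition \eqref{ellpcond} and the Cauchy--Schwarz inequality, this yields
\[
\sup_{t}\int_{B_\rho(x_0)}|u-c|^2\,dx+\iint_{Q_\rho(z_0)}|Du|^2\,dz\le \frac{C}{(R-\rho)^2}\iint_{Q_R(z_0)}|u-c|^2\,dz.
\]

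The second step converts the right-hand side back into an integral of $|Du|$ raised to a power \emph{below} $2$, by means of a parabolic Sobolev--Poincaré inequality. Taking $c=u_{Q_R}$ and the sub-critical parabolic exponent $q=\frac{2(n+2)}{n+4}<2$, one bounds $\iint_{Q_R}|u-u_{Q_R}|^2$ by a product of $\sup_t\int|u-u_{Q_R}|^2$ and $\big(\iint|Du|^q\big)^{2/q}$ after interpolating the spatial oscillation against the time oscillation. Inserting this into the Caccioppoli bound and absorbing the supremum term gives, over cylinders of comparable size (so that $R-\rho\sim\rho$), a reverse Hölder inequality of the form
\[
\aave_{Q_\rho(z_0)}|Du|^2\,dz\le C\left(\aave_{Q_{2\rho}(z_0)}|Du|^q\,dz\right)^{2/q},\qquad q<2.
\]

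Finally I would invoke the parabolic version of Gehring's lemma (the Giaquinta--Modica / Giaquinta--Struwe higher-integrability result), which asserts that any nonnegative $g$ satisfying such a reverse Hölder inequality on all subcylinders automatically lies in $L^{1+\delta}_{loc}$ for some $\delta>0$, together with the accompanying norm estimate. Applying it to $g=|Du|^2$ produces an exponent $s=2(1+\delta)>2$ with $Du\in L^s_{loc}(Q_T)$, and a routine covering/scaling argument on the pair $Q_R\subset Q_{4R}$ then yields the stated inequality. The main obstacle is the parabolic Sobolev--Poincaré step: because the cylinders scale anisotropically ($R$ in space versus $R^2$ in time), one cannot simply quote the classical Poincaré inequality, and the time oscillation of $u$ must be controlled through the equation itself (the weak formulation represents $u_t$ as a spatial divergence). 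Pinning down the correct sub-$2$ exponent $q$ and handling this anisotropy carefully is the delicate part; once the reverse Hölder inequality is secured, Gehring's lemma delivers the conclusion mechanically.
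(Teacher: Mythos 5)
The paper offers no proof of this lemma at all --- it defers entirely to the cited reference \cite{SG} of Giaquinta and Struwe --- and your outline (Caccioppoli inequality tested with $u-u_{Q_R}$, a parabolic Sobolev--Poincar\'e step in which the time oscillation of $u$ is controlled through the equation itself, and then the parabolic Gehring--Giaquinta--Modica lemma applied to $|Du|^2$) is precisely the argument carried out in that reference. So your proposal is correct and follows essentially the same route as the proof the paper relies on.
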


We leave without proof as this was done in \cite{SG}.   We now move to the second lemma needed for the last Morrey estimate that we desire.

\begin{lem}\label{int_est}
Let $u\in V_2(Q_T)$ be a weak solution to the following system

\begin{equation*}
u_t^i - D_\alpha\left(a_{ij}^{\alpha\beta}(z)D_\beta u^j\right) =0
\end{equation*}

in $Q_T$ with $i=1,\dots, N$.   Assume that the $a_{ij}^{\alpha\beta}\in L^\infty(Q_T)\cap VMO(Q_T)$ and that they satisfy the uniform ellipticity condition.   Then for any $0<\mu < n+2$ there exists $R_0$ and $C$ depending only on $n+2$, $N$, $\mu$, $\Lambda$, and $\left[a_{ij}^{\alpha\beta}\right]_{BMO(Q_T;\sigma)}$ such that for $\rho\leq R\leq \frac{1}{2}\textrm{min}(R_0,\textrm{dist}(z_0,\d_pQ_T))$ the following inequality holds

\begin{equation*}
\iint_{Q_\rho(z_0)} |Du|^2 dz \leq C\left(\frac{\rho}{R}\right)^\mu \iint_{Q_R(z_0)} |Du|^2 dz
\end{equation*}
\end{lem}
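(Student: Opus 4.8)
The plan is to mirror the coefficient-freezing argument used for Theorem \ref{cont_estimate}, the essential new difficulty being that a $VMO$ coefficient need not have a well-defined pointwise value at $z_0$. Accordingly I would freeze at an \emph{average} rather than a point value: let $\bar a_{ij}^{\alpha\beta}$ denote the mean of $a_{ij}^{\alpha\beta}$ over $Q_R(z_0)$, let $w$ solve the constant-coefficient homogeneous system with principal part $\bar a_{ij}^{\alpha\beta}$ and boundary data $w = u$ on $\d_p Q_R(z_0)$, and set $v = u - w$. As in the earlier proof, $v$ vanishes on $\d_p Q_R(z_0)$ and solves the frozen system with right-hand side $D_\alpha\big( (a_{ij}^{\alpha\beta}(z) - \bar a_{ij}^{\alpha\beta}) D_\beta u^j \big)$. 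Writing $Du = Dw + Dv$ and applying Lemma \ref{const_est} to $w$ gives
\begin{equation*}
\iint_{Q_\rho(z_0)} |Du|^2 \le C\left(\frac{\rho}{R}\right)^{n+2} \iint_{Q_R(z_0)} |Dw|^2 + C\iint_{Q_R(z_0)} |Dv|^2 .
\end{equation*}

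Next I would test the equation for $v$ against $v$, integrate by parts, discard the nonnegative contribution of the time term, and use the ellipticity condition (\ref{ellpcond}) on the left and Cauchy--Schwarz on the right, exactly as in Theorem \ref{cont_estimate}. Abbreviating $a - \bar a$ for $a_{ij}^{\alpha\beta}(z) - \bar a_{ij}^{\alpha\beta}$, this produces
\begin{equation*}
\iint_{Q_R(z_0)} |Dv|^2 \le C\iint_{Q_R(z_0)} |a - \bar a|^2 |Du|^2 \, dz .
\end{equation*}
Unlike the continuous case, I cannot bound $|a - \bar a|$ pointwise by a small $\varepsilon$; the smallness now lives only in an averaged sense, and extracting it is the heart of the proof.

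The main obstacle is precisely this step, and it is where Lemma \ref{rev_hol_ineq} and the $VMO$ hypothesis enter together. Let $s > 2$ be the reverse-H\"older exponent of Lemma \ref{rev_hol_ineq} and put $q = \frac{2s}{s-2}$, so that $\frac{2}{q} + \frac{2}{s} = 1$. H\"older's inequality with conjugate exponents $\frac{s}{s-2}$ and $\frac{s}{2}$ gives
\begin{equation*}
\iint_{Q_R} |a - \bar a|^2 |Du|^2 \le \left( \iint_{Q_R} |a - \bar a|^q \right)^{2/q} \left( \iint_{Q_R} |Du|^s \right)^{2/s} .
\end{equation*}
The second factor is controlled by the reverse-H\"older bound of Lemma \ref{rev_hol_ineq}, which replaces the $L^s$ norm on $Q_R$ by the $L^2$ norm on $Q_{4R}$; the first factor is controlled by the $VMO$ seminorm, since the John--Nirenberg inequality bounds the $L^q$ oscillation by $C_q [a]_{BMO(Q_T;R)}$, a quantity tending to $0$ as $R \to 0$ because $a \in VMO(Q_T)$. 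A short accounting of the volume normalizations shows that $|Q_R|^{2/q + 2/s} = |Q_R|$ cancels against $|Q_{4R}|^{-1}$ up to the dimensional factor $|Q_R|/|Q_{4R}| = 4^{-(n+2)}$, so that altogether
\begin{equation*}
\iint_{Q_R(z_0)} |Dv|^2 \le C\, [a]_{BMO(Q_T;R)}^2 \iint_{Q_{4R}(z_0)} |Du|^2 .
\end{equation*}
The delicate point to get right is the compatibility of the $L^2$-averaged $VMO$ modulus with the higher integrability $s > 2$ forced by reverse H\"older, while carefully tracking the attendant passage from scale $R$ to scale $4R$.

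Finally I would close the iteration. Since $|Dw|^2 \le 2|Du|^2 + 2|Dv|^2$ and $Q_R \subset Q_{4R}$, the preceding estimates combine into
\begin{equation*}
\iint_{Q_\rho(z_0)} |Du|^2 \le \left( C\left(\frac{\rho}{R}\right)^{n+2} + C[a]_{BMO(Q_T;R)}^2 \right) \iint_{Q_{4R}(z_0)} |Du|^2 ,
\end{equation*}
valid whenever $Q_{4R}(z_0) \subset Q_T$. Setting $R' = 4R$, absorbing $4^{n+2}$ into the constant, and using the monotonicity of $r \mapsto \iint_{Q_r(z_0)}|Du|^2$ to cover the range $R'/4 < \rho \le R'$ after enlarging the constant, this takes the form $H(\rho) \le \big( A (\rho/R')^{n+2} + \varepsilon(R') \big) H(R')$ with $H(r) = \iint_{Q_r(z_0)}|Du|^2$ and $\varepsilon(R') = C[a]_{BMO(Q_T;R'/4)}^2$. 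The function $H$ is increasing, hence almost increasing with $K_H = 1$, and $\varepsilon(R') \to 0$ as $R' \to 0$ by the $VMO$ hypothesis, so I would fix $R_0$ so small that $\varepsilon(R') < \varepsilon_0$ for $R' \le R_0$. Invoking Lemma \ref{almst_inc_lem} with $F(r) = r^\mu$, $B = 0$, and $\beta = n+2$ --- whose hypothesis (2) holds for any $\gamma \in [\mu, n+2)$ since $r^{\gamma - \mu}$ is increasing --- yields $H(\rho) \le C \frac{F(\rho)}{F(R)} H(R) = C\left(\frac{\rho}{R}\right)^\mu H(R)$, which is exactly the asserted estimate; the constants and $R_0$ depend only on the quantities listed in the statement.
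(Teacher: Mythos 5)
Your proposal follows essentially the same route as the paper's proof: freeze the coefficients at their average over $Q_R(z_0)$, compare $u$ with the constant-coefficient solution $w$ via Lemma \ref{const_est}, bound $\iint |Dv|^2$ by H\"older's inequality combined with the reverse H\"older inequality of Lemma \ref{rev_hol_ineq} and the $VMO$ smallness, and close with Lemma \ref{almst_inc_lem}. If anything, you are more careful than the paper at the two points it glosses over: the John--Nirenberg passage from the $L^2$-based $VMO$ modulus to the $L^{2s/(s-2)}$ oscillation, and the relabeling/monotonicity bookkeeping needed to return from scale $4R$ to scale $R$ (the paper's displayed inequality bounding $\iint_{Q_{4R}(z_0)}|Du|^2$ by $\iint_{Q_R(z_0)}|Du|^2$ is backwards as written, and your fix is the intended one).
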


\begin{proof}
First define the following

\begin{equation*}
\left(a_{ij}^{\alpha\beta}\right)_{z_0R} := \aave_{Q_R(z_0)} a_{ij}^{\alpha\beta}(x,t)\ dxdt .
\end{equation*}

As in theorem (\ref{cont_estimate}), let $w$ and $v=u-w$ satisty the following systems of partial differential equations respectively

\begin{equation}
\left\{
\begin{array}{cc}
w_t^i-D_\alpha\left(a_{ij}^{\alpha\beta}(z_0)D_\beta w^j\right)=0 & \textrm{ in } Q_R(z_0) \\
w=u & \textrm{ on } \d_pQ_R(z_0)
\end{array}\right.
\end{equation}

and

\begin{equation}\label{sys_pde}
\left\{
\begin{array}{cc}
v_t^i-D_\alpha\left(\left(a_{ij}^{\alpha\beta}\right)_{z_0R} D_\beta v^j\right) = D_\alpha\left(\left(a_{ij}^{\alpha\beta}(z)-\left(a_{ij}^{\alpha\beta}\right)_{z_0R}\right)D_\beta u^j\right) & \textrm{ in } Q_R(z_0) \\
v=0 & \textrm{ on } \d_pQ_R(z_0)
\end{array}\right. .
\end{equation}

Similar to the proof of theorem (\ref{cont_estimate}) one can obtain the following estimate

\begin{equation*}
\iint_{Q_\rho(z_0)}|Du|^2\leq C\left(\frac{\rho}{R}\right)^{n+2}\iint_{Q_R(z_0)}|Du|^2 + C\iint_{Q_R(z_0)} |Dv|^2 .
\end{equation*}

Multiplying $v$ to equation (\ref{sys_pde}), integrating and performing an integration by parts, one gets the following inequality

\begin{equation*}
\iint_{Q_R(z_0)}|Dv|^2\leq C\iint_{Q_R(z_0)} \left|a_{ij}^{\alpha\beta}(z)-\left(a_{ij}^{\alpha\beta}\right)_{z_0R}\right|^2|Du|^2 .
\end{equation*}

Using H\"{o}lder's inequality, lemma (\ref{rev_hol_ineq}) and with the fact that the $a_{ij}^{\alpha\beta}\in VMO$ one obtains the following inequality

\begin{equation*}
\begin{aligned}
\iint_{Q_R(z_0)}|Dv|^2 &\leq C\left(\iint_{Q_R(z_0)} \left|a_{ij}^{\alpha\beta}(z)-\left(a_{ij}^{\alpha\beta}\right)_{z_0R}\right|^{2p}\right)^{\frac{1}{p}}\left(\iint_{Q_R(z_0)}|Du|^{2q}\right)^{\frac{1}{q}} \\
&=C\cdot m\left(Q_R(z_0)\right)\left(\aave_{Q_R(z_0)} \left|a_{ij}^{\alpha\beta}(z)-\left(a_{ij}^{\alpha\beta}\right)_{z_0R}\right|^{\frac{2s}{s-2}}\right)^{\frac{s-2}{s}}\left(\aave_{Q_R(z_0)}|Du|^{s}\right)^{\frac{2}{s}} \\
&\leq C\cdot m\left(Q_R(z_0)\right)\varepsilon\left(\aave_{Q_R(z_0)}|Du|^s\right)^{\frac{2}{s}} \\
&\leq C\varepsilon \iint_{Q_{4R}(z_0)} |Du|^2
\end{aligned}
\end{equation*}

Therefore one gets the following inequality 

\begin{equation*}
\begin{aligned}
\iint_{Q_\rho(z_0)}|Du|^2&\leq \left(C\left(\frac{\rho}{R}\right)^{n+2} +\varepsilon\right) \iint_{Q_{4R}(z_0)} |Du|^2 \\
&\leq \left(C\left(\frac{\rho}{R}\right)^{n+2} +\varepsilon\right) \iint_{Q_R(z_0)} |Du|^2 .
\end{aligned}
\end{equation*}

Then by using lemma (\ref{almst_inc_lem}) one can achieve the desired result.   Thus this completes the proof.
\end{proof}

We are now in the position to state and prove the final theorem in this paper.   We have all the tools necessary to get the final Morrey estimate which extends the result in the system of linear elliptic partial differential equations case.

\begin{theo}
Let $u\in V_2(Q_T)$ be a weak solution to the following system of parabolic partial differential equations

\begin{equation*}
u_t^i - D_\alpha\left(a_{ij}^{\alpha\beta}(z)D_\beta u^j\right) = - \textrm{div }f^i
\end{equation*}

in $Q_T$ with $i=1,\dots, N$ and let the $a_{ij}^{\alpha\beta}$ satisfy the uniform ellipticity condition.   Suppose there exists $\lambda$ and $\gamma$ such that $\lambda < \gamma< n+2$ and that the function $\frac{r^{\gamma- \lambda}}{\f^2(r)}$ is almost increasing.   If the $a_{ij}^{\alpha\beta}\in L^\infty(Q_T)\cap VMO(Q_T)$ and $f^i \in L_\f^{2, \lambda}(Q_T)$, then $Du\in L_\f^{2, \lambda}(Q')$ for any $Q'\subset\subset Q_T$ and for $Q_R\subset Q_T$ and $\rho \leq R$.   Moreover the following interior integral estimate holds

\begin{equation*}
\iint_{Q_\rho}|Du|^2 dz \leq C \frac{\rho^\lambda \f^2(\rho)}{R^\lambda \f^2(R)}\iint_{Q_R}|Du|^2 dz + C\f^2(\rho)\rho^\lambda\|f\|_{L_\f^{2, \lambda}}^2
\end{equation*}

\end{theo}

\begin{proof}
Again as in theorem (\ref{cont_estimate}), let $w$ and $v=u-w$ satisfy the following systems of partial differential equations respectively

\begin{equation}\label{sys_1}
\left\{
\begin{array}{cc}
w_t^i-D_\alpha\left(a_{ij}^{\alpha\beta}(z)D_\beta w^j\right)=0 & \textrm{ in } Q_R(z_0) \\
w=u & \textrm{ on } \d_pQ_R(z_0)
\end{array}\right.
\end{equation}

and

\begin{equation}\label{sys_2}
\left\{
\begin{array}{cc}
v_t^i-D_\alpha\left(a_{ij}^{\alpha\beta}(z)D_\beta v^j\right)=-\textrm{div }f^i & \textrm{ in } Q_R(z_0) \\
v=0 & \textrm{ on } \d_pQ_R(z_0)
\end{array}\right. .
\end{equation}

Applying lemma (\ref{int_est}) to $w$ yields the following inequality

\begin{equation}\label{eq_1}
\iint_{Q_\rho(z_0)}|Du|^2 \leq C\left(\frac{\rho}{R}\right)^\mu \iint_{Q_R(z_0)} |Du|^2 + C\iint_{Q_R(z_0)} |Dv|^2.
\end{equation}

Then multiplying $v$ to equation (\ref{sys_2}), integrating and performing an integration by parts, just as in theorem (\ref{cont_estimate}), one has the following 

\begin{equation*}
\iint_{Q_R(z_0)} |Dv|^2 \leq C \iint_{Q_R(z_0)} |f||Dv| .
\end{equation*}

Using the Cauchy-Schwartz inequality one this last inequality one gets

\begin{equation}\label{eq_2}
\iint_{Q_R(z_0)} |Dv|^2 \leq C \iint_{Q_R(z_0)} |f|^2 .
\end{equation}

Since one has $f^i \in L_\f^{2, \lambda}(Q_T)$, combining equations (\ref{eq_1}) and (\ref{eq_2}) one obtains the following inequality

\begin{equation*}
\iint_{Q_\rho(z_0)}|Du|^2 dz \leq C \left(\frac{\rho}{R}\right)^\mu\iint_{Q_R(z_0)}|Du|^2 dz + C\f^2(\rho)\rho^\lambda\|f\|_{L_\f^{2, \lambda}}^2 .
\end{equation*}

Therefore the result will follow by applying lemma (\ref{almst_inc_lem}).   Thus this completes the proof.
\end{proof}

\end{document}